\newtheorem{theorem}{Theorem}[section]
\newtheorem{lemma}[theorem]{Lemma}
\theoremstyle{definition}
\theoremstyle{remark}
\numberwithin{equation}{section}
\DeclareTextCommandDefault{\textcopyright}{\textcircled{c}}
\DeclareTextCommandDefault{\textregistered}{\textcircled{%
      \check@mathfonts\fontsize\sf@size\z@\math@fontsfalse\selectfont R}}
\begin{document}

\title{Bloch's Theorem for Heat Maps}

\author{Jean C. Cortissoz}
\address{Department of Mathematics, Universidad de los Andes, Bogot\'a DC, COLOMBIA.}


\subjclass[2010]{Primary 30C55}



\keywords{Bloch's Theorem, Bloch's constant, contraction mapping principle}

\begin{abstract}
In this paper we give a proof via the contraction mapping principle
of a Bloch-type theorem for normalised Bochner-Takahashi $K$-mappings,
which are solutions to the homogenous equation $Lu=0$, where $L$ is the
heat operator. 
\end{abstract}

\maketitle
\section{Introduction}

\begin{theorem}
Let $f\left(z\right)$ be an analytic function on $B_1\left(0\right)$ (the unit disk centered at the origin)
satisfying $f\left(0\right)=0$ and $f'\left(0\right)=1$. Then there is a constant $B$ (called 
Bloch's constant) independent of $f$, such that there is a subdomain
$\Omega\subset B_1\left(0\right)$ where $f$ is one-to-one and whose
image contains a disk of radius $B$ (which we shall call, as is customary, a {\bf schlicht 
or univalent disk}, 
and we will say that $f$ covers a {\bf schlicht or univalent disk of radius} B).
\end{theorem}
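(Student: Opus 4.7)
The plan is to prove the theorem via the contraction mapping principle, in keeping with the method advertised by the paper. The strategy breaks into two stages: first, use a normalization argument (Landau's trick) to locate a point $z_0 \in B_1(0)$ where $|f'(z_0)|$ is large relative to $(1-|z_0|)^{-1}$ and $f'$ varies slowly on a small disk around $z_0$; second, for each target value $w$ in a neighborhood of $f(z_0)$, construct an auxiliary map whose unique fixed point is a preimage of $w$ under $f$, thereby exhibiting a univalent disk in the image.

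For the first stage I would consider the continuous function $h(z) = (1-|z|)|f'(z)|$ on $B_1(0)$. Since $h$ vanishes on the boundary and $h(0)=1$, it attains its maximum at some $z_0 \in B_1(0)$ with $h(z_0) \ge 1$. Setting $\rho_0 = 1-|z_0|$ and $\rho = \rho_0/2$, the disk $B_\rho(z_0)$ lies in $B_1(0)$, and for every $z$ in this disk one has $(1-|z|)|f'(z)| \le \rho_0|f'(z_0)|$, so $|f'(z)| \le 2|f'(z_0)|$. A Cauchy integral estimate then bounds $|f''|$ on $B_{\rho/2}(z_0)$ by a constant multiple of $|f'(z_0)|/\rho$, and integrating along a segment yields a smaller radius $r$, comparable to $\rho$, such that
\[
|f'(z) - f'(z_0)| \le \tfrac{1}{2}|f'(z_0)| \qquad \text{for all } z \in B_r(z_0).
\]

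For the second stage, for each $w \in \mathbb{C}$ I would introduce the map
\[
T_w(z) = z - \frac{f(z) - w}{f'(z_0)},
\]
whose derivative $T_w'(z) = (f'(z_0) - f'(z))/f'(z_0)$ satisfies $|T_w'(z)| \le 1/2$ on $B_r(z_0)$. Thus $T_w$ is a $\tfrac{1}{2}$-contraction, and if $|w - f(z_0)|$ is small enough that $|T_w(z_0) - z_0| \le r/2$ — equivalently $|w - f(z_0)| \le r|f'(z_0)|/2$ — then $T_w$ maps $B_r(z_0)$ into itself. The contraction mapping principle then produces a unique fixed point $z^\ast \in B_r(z_0)$ with $f(z^\ast) = w$. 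It follows that $f$ is injective on the subdomain $\Omega := f^{-1}(D) \cap B_r(z_0)$, where $D$ is the disk around $f(z_0)$ of radius $r|f'(z_0)|/2$, and that $f(\Omega) \supset D$. Using $\rho|f'(z_0)| \ge 1/2$ converts this into an explicit numerical lower bound for the radius of the schlicht disk, independent of $f$.

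The main obstacle lies in the calibration of the first stage: one must tune the three radii — $\rho_0$, the radius $\rho$ on which $|f'|$ is controlled, and the radius $r$ on which the contraction is executed — so that the resulting covered disk, of radius $\asymp r|f'(z_0)|$, is bounded below by a universal constant. Once this balancing is in place, the contraction mapping step is essentially automatic; the heat-equation generalization pursued in the rest of the paper will presumably replace the Cauchy estimate for $f''$ with a parabolic analogue, but the overall skeleton of the argument should follow the same pattern.
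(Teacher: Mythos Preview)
Your argument is correct and is in fact the standard route to the classical Bloch theorem: Landau's normalisation (maximise $(1-|z|)\,|f'(z)|$), a Cauchy estimate for $f''$, and then the Newton-type contraction $T_w(z)=z-(f(z)-w)/f'(z_0)$. One small imprecision: the claim that $h(z)=(1-|z|)\,|f'(z)|$ ``vanishes on the boundary'' is not automatic, since $|f'|$ may blow up as $|z|\to 1$. The usual fix is to replace $f$ by $f_r(z)=f(rz)/r$ for $r<1$ close to $1$, apply your argument to $f_r$ (for which $h$ is continuous on the closed disk and vanishes on $|z|=1$), and rescale; the paper deals with the analogous issue in its setting by simply assuming $F'$ bounded and explaining why this is harmless.

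It is worth noting that the paper does not actually prove this classical statement---it is quoted as background---and that the method the paper develops for its main result (Theorem~\ref{BlochTakahashi}) does \emph{not} use Landau's trick. Instead of locating a single maximiser of $(1-|z|)\,|f'(z)|$, the paper builds a finite sequence of radii $r_0<r_1<\cdots<r_l$ along which $M(r)^{1/(m+1)}$ grows by a fixed factor $\gamma^4$, and then runs the contraction argument twice: once centred at a point $\beta_n$ where $M(r_n)$ is attained, and once at the origin. Balancing the two resulting estimates (one improves as $M(r_\gamma)$ grows, the other as it shrinks) produces the universal bound. Your Landau-based approach is shorter and sharper for holomorphic functions on the disk, but it relies on a pointwise hypothesis on $|f'|$; the paper's two-scale scheme is tailored to the Bochner--Takahashi condition~(\ref{Takahashicondition0}), which is a sup-over-balls inequality rather than a pointwise one, so your closing expectation that the heat-equation case ``should follow the same pattern'' is not quite how the paper proceeds.
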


The previous statement is known as Bloch's theorem and it was proved by Andr\'e Bloch in \cite{Bloch}
(Proposition G). Besides its beauty, 
Bloch's theorem is nothing short of surprising: who would have expected that a
bound from below for the radius of a univalent disk 
covered by a member of a family of holomorphic functions on the unit disk only depends on 
the normalisation at $z=0$, namely that $\left|f'\left(0\right)\right|=1$
(the fact that $f\left(0\right)=0$ is actually irrelevant)?
On the other hand, one of the most,
if not the most, celebrated problem in Geometric Function Theory is to find the exact value of
Bloch's constant $B$.

Starting with the work of Bochner \cite{Bochner}, Bloch's theorem has been generalised
to several real and complex variables. The work of Wu \cite{Wu} is 
of particular interest, as he proved a very general Bloch type theorem for
solutions to homogeneous hypoelliptic equations. Wu uses compactness arguments in his proofs,
and he does not give effective estimates on the radius of the schlicht (univalent) disks covered
by the different families of functions that he considers in his work
(we supply a new proof of Wu's result with estimates for elliptic operators of constant coefficients in \cite{Cortissoz}). Also, 
in Wu's work hypoelliptic
operators as the heat operator (our main concern in this paper) are not covered, as the 
derivatives in the operators considered by him need to be of the same order. To have a broader
overview on the subject, we invite the reader to consult the paper \cite{Chen2} and the references therein.

\subsection{Heat Bochner-Takahashi mappings}
A {\bf Heat Bochner-Takahashi $K$-mapping} on the unit ball $B_1\left(0\right)\subset \mathbb{R}^m$ is a map 
 \[
 F:B_1\left(0\right)\subset \mathbb{R}^{m+1}\longrightarrow \mathbb{R}^{m+1},
 \]
 \[
 F\left(x_1,\dots,x_m,t\right)=\left(F_1\left(x_1,\dots,x_m,t\right),\dots,F_m\left(x_1,\dots,x_m,t\right)\right)^T,
 \]
 such that
 each one of its components satisfies {\it the heat equation}, i.e., 
 \[
 \Delta F_j -\frac{\partial}{\partial t}F_j=0,
 \]
 and such that
 \begin{equation}
\label{Takahashicondition0}
\max_{\left\|z\right\|\leq r}\left\|F'\left(z\right)\right\|\leq K
\max_{\left\|z\right\|\leq r}\left|\mbox{det}\left(F'\left(z\right)\right)\right|^{\frac{1}{m+1}},
\quad \mbox{for all}\quad 0\leq r<1.
\end{equation}
Here, for an $m+1$-tuple $z=\left(x_1,\dots,x_m,t\right)$, 
$\left\|z\right\|$ denotes its euclidean norm, and for a matrix $A=\left(a_{ij}\right)$, $\left\|A\right\|$ denotes
the norm
\[
\left(\sum_{i,j} \left|a_{ij}\right|^2\right)^{\frac{1}{2}}.
\]

We have the following Bloch type theorem for Heat Bochner-Takahashi $K$-maps.
\begin{theorem}
\label{BlochTakahashi}
Let $F:B_1\left(0\right)\subset \mathbb{R}^{m+1}\longrightarrow\mathbb{R}^{m+1}$ be a 
Heat Bochner-Takahashi $K$-mapping, normalised so that
$\left|\mbox{det}\left(F'\left(0\right)\right)\right|=1$. Then 
$F$ covers a schlicht disk of radius at least $\dfrac{0.22^4}{2^{m+5}m}\dfrac{1}{a_m^4 K^{2m+3}}$,
where $a_m$ is a dimensional constant which depends only on the $m+1$-dimensional heat kernel.
\end{theorem}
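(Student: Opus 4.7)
My plan is to mirror the classical contraction-mapping derivation of Bloch's theorem, replacing Cauchy's derivative estimate by a heat-kernel derivative estimate on parabolic balls contained in the domain, and using the Bochner-Takahashi condition (\ref{Takahashicondition0}) to convert information about $\det F'$ into information about $\|F'\|$.

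The first step is a Landau-type maximum argument to locate a convenient centre. Setting $\Phi(z)=|\det F'(z)|^{1/(m+1)}$ and $M(z)=(1-\|z\|)\Phi(z)$, the function $M$ is continuous, vanishes on $\partial B_1(0)$, and satisfies $M(0)=1$ by the normalisation, so its maximum is attained at some interior point $z_0$. Writing $r_0=(1-\|z_0\|)/2$, the inequality $M\leq M(z_0)$ on $B_{r_0}(z_0)$ yields $\Phi(z)\leq 2\Phi(z_0)$ there, while $M(z_0)\geq 1$ forces $\Phi(z_0)\geq 1/(2r_0)$. The Bochner-Takahashi inequality (\ref{Takahashicondition0}) at radius $\|z_0\|+r_0$ then promotes this to the pointwise bound $\|F'(z)\|\leq 2K\Phi(z_0)$ on $B_{r_0}(z_0)$, while Hadamard's inequality together with the cofactor formula controls $\|[F'(z_0)]^{-1}\|$ by a dimensional constant times $K^m/\Phi(z_0)$.

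Next I would use the heat equation to control $F''$. Because $L$ commutes with partial derivatives, every derivative of every component of $F$ is itself a solution of $Lu=0$; the standard heat-kernel representation on parabolic balls compactly contained in $B_{r_0}(z_0)$ then yields interior estimates of the form $\|F''(z)\|\leq (a_m/s)\sup_{B_{r_0}(z_0)}\|F'\|$ for $z$ at distance at least $s$ from the boundary of the ball, with the dimensional constant $a_m$ coming from the $L^1$-norms of derivatives of the $(m+1)$-dimensional heat kernel. Iterated on nested balls whose radii are chosen to optimise the subsequent contraction, this is the mechanism through which $a_m$ enters, ultimately accumulating to the fourth power in the final bound.

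The last step is the contraction itself. For $w$ near $F(z_0)$ set $T_w(z)=z-[F'(z_0)]^{-1}(F(z)-w)$, whose fixed points solve $F(z)=w$. Taylor's formula gives a remainder estimate in terms of $\sup\|F''\|$, and the preceding bounds make $T_w$ a $\tfrac12$-contraction of $\overline{B_\rho(z_0)}$ into itself once $\rho$ is a suitable multiple of $r_0/(a_m K^{m+1})$ and $|w-F(z_0)|\leq \rho/(2\|[F'(z_0)]^{-1}\|)$. Using $\Phi(z_0)\geq 1/(2r_0)$ to simplify the latter cancels the $r_0$-dependence, and the radius of the univalent disk obtained around $F(z_0)$ is bounded below by a dimensional multiple of $1/(a_m^4 K^{2m+3})$; the explicit prefactor $0.22^4/(2^{m+5}m)$ then emerges from the numerical optimisation of the multiplicative constants collected along the way. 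The main obstacle, and the bulk of the labour, will be exactly this bookkeeping: tracking the constants through four distinct inequalities (Landau maximum, Bochner-Takahashi promotion, heat-kernel derivative estimate on nested scales, and contraction/self-map) tightly enough that the powers of $K$ and $a_m$ collapse to the advertised $K^{2m+3}$ and $a_m^4$ rather than something larger.
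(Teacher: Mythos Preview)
Your outline has a genuine gap at the derivative-estimate step, and it is precisely the place where the heat operator differs from an elliptic one. You write $\|F''(z)\|\le (a_m/s)\sup\|F'\|$; this is the elliptic scaling. Because the domain is a \emph{Euclidean} ball in $(x,t)$-space and the heat equation is only invariant under the anisotropic rescaling $(x,t)\mapsto(rx,r^{2}t)$, the correct interior estimate (the paper's Lemma~\ref{cauchyestimates}) reads $\sup_{B_{r^{2}/4}(a)}|\partial^{k}u|\le (a_m/r^{2|k|})\sup_{B_r(a)}|u|$: one loses $r^{2}$ per derivative, and the bound only holds on the much smaller ball $B_{r^{2}/4}$. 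Feeding this into your contraction gives $\rho\lesssim r_0^{2}/(a_mK^{m+1})$ rather than $r_0/(a_mK^{m+1})$, and the schlicht radius then carries an extra factor of $r_0$ that your Landau bound $\Phi(z_0)\ge 1/(2r_0)$ cannot absorb. As written, the $r_0$-dependence does \emph{not} cancel, and since the Landau point may sit arbitrarily close to $\partial B_1(0)$, the argument collapses. (A possible repair is to run the maximum on $(1-\|z\|)^{2}\Phi(z)$ instead of $(1-\|z\|)\Phi(z)$, which yields $r_0^{2}\Phi(z_0)\ge 1/4$ and restores the cancellation; but that is not what you wrote, and the fourth power of $a_m$ you announce does not arise from the ``nested-ball iteration'' you describe.)

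For comparison, the paper does not use a Landau maximum point at all. It constructs a finite increasing sequence $r_0<r_1<\cdots<r_l<1$ with $M(r_{j+1})^{1/(m+1)}=\gamma^{4}M(r_j)^{1/(m+1)}$, observes by pigeonhole that some relative gap $\epsilon_n=r_{n+1}/r_n-1$ satisfies $\epsilon_n\ge\gamma^{-(n+1)}$, and runs the contraction centred at the point $\beta_n$ where $M(r_n)$ is attained. The exponent $4$ on $\gamma$ is chosen precisely so that the $\epsilon_n^{4}$ loss coming from the parabolic estimate is exactly balanced by the growth of $M(r_n)$, yielding the bound (\ref{blochwu1}); a second contraction centred at the origin gives (\ref{blochwu2}) and covers the regime $M(r_\gamma)^{1/(m+1)}\le\gamma^{4}$. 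The powers $a_m^{4}$ and $K^{2m+3}$ come out of this two-case balancing together with inequality~(\ref{Wusmalleigen}), not from iterating an estimate on nested balls.
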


The proof of this theorem follows the proof given in \cite{Cortissoz} with some modifications
-we use a Taylor expansion of order 2, instead of using the analiticity of the
functions in the family. We include
all the details below so that this paper can be read independently.

We must point out that Theorem \ref{BlochTakahashi} generalizes Bochner's theorem (which is for harmonic maps),
not only by considering a more general family of functions under a weaker condition
(instead of (\ref{Takahashicondition0}) Bochner considers a poinwise estimate), but
also by providing effective estimates from below for the radius of a univalent disk covered by
any member of the family.

\section{Preliminary notions and notation}

We will be studying functions $F:B_1\left(0\right)\longrightarrow \mathbb{R}^{m+1}$. We will
refer to $x_1,\dots,x_m$ as the spatial variables and to $x_{m+1}$ as the time variable,
which we will sometimes denote by a $t$. This means that our coordinates
are $(x,t)$ where $x=\left(x_1,\dots,x_m\right)$. We shall employ the letter $z$ to refer
to $\left(x,t\right)$.

The heat operator $L$ is defined as
\[
L= \sum_{j=1}^m\frac{\partial^2}{\partial x_m^2}-\frac{\partial}{\partial t}.
\]

Our notation for open balls has a little pecualiarity. If we write $B_r\left(x_0\right)$
we mean the ball of radius $r$ centered in $x_0$ in $\mathbb{R}^m$, whereas if we write
$B_r\left(x_0,t_0\right)$ we mean the ball of radius $r$ centered at $\left(x_0,t_0\right)$
in $\mathbb{R}^{m+1}$. Distances are measured in the euclidean metric, and we denote
the euclidean norm of $w\in \mathbb{R}^l$ by $\left\|w\right\|$. The closure
of a set $C$ will be denoted by $\overline{C}$.

Given a square $m+1$ by $m+1$ matrix $A=\left(a_{ij}\right)$, $\lambda\left(A\right)$ and $\Lambda\left(A\right)$
represent the square root of the minimum and the maximum of the eigenvalues
of $A^*A$. As we said before, $\left\|A\right\|$ represents the norm
\[
\left(\sum_{i,j} \left|a_{ij}\right|^2\right)^{\frac{1}{2}}
\]
of $A$. This norm satisfies the following well-known inequalities
\[
\left|A\right|\leq \left\|A\right\|\leq \sqrt{m+1}\left|A\right|,
\]
where $\left|A\right|$ is the operator norm of $A$. We also have the the following identities:
\[
\left|A^{-1}\right|=\Lambda\left(A^{-1}\right)=\frac{1}{\lambda\left(A\right)}.
\]
In general, $k=\left(k_1,k_2,\dots,k_m,k_{m+1}\right)$ represents a multiindex, and related to a multiindex
we define 
\[
\left|k\right|=k_1+k_2+\dots+k_m+k_{m+1}\quad \mbox{and}\quad k!=k_1!k_2!\dots k_m!k_{m+1}!.
\]
Also, we for
two multiindices $k$ and $k'$ we say that $k'<k$ if for every $j=1,2,\dots,m$, $k'_j\leq k_j$ an at least
for one index $l$, $k'_l<k_l$.

Let
\[
F:U\subset \mathbb{R}^{m+1}\longrightarrow \mathbb{R}^{m+1},
\quad
F\left(z\right)=\left(F_1\left(z\right),\dots,F_m\left(z\right),F_{m+1}\left(z\right)\right)^T,
\]
with $z=\left(x_1,\dots,x_m,t\right)^T$, be a smooth mapping.
Regarding differentiation
we write the column vector $F^{\left(k\right)}\left(x\right)$ as
\[
F^{\left(k\right)}\left(z\right)=
\left(\frac{\partial^{\left|k\right|}F_j}{\partial x_1^{k_1}\partial x_2^{k_2}\dots\partial x_m^{k_m}\partial t^{k_{m+1}}}\right)_{j=1,\dots,m},
\]
and its Jacobian matrix is defined as
\[
F'\left(z\right)=\left(\frac{\partial F_i}{\partial x_j}\left(z\right)\right)_{i,j=1,\dots,m,m+1}.
\]
As it is usual, we shall use the convention $z^{k}=x_1^{k_1}x_2^{k_2}\dots x_m^{k_m}t^{k_{m+1}}$
for a given multiindex $\left(k_1,\dots,k_m,k_{m+1}\right)$. 

\subsection{}

For the proof we recall Taylor's theorem in several variables with residue.
\begin{theorem}[Taylor's theorem]
Let $f$ be $k+1$ times continously differentiable on an open neighborhood
of $a$. Then we have that
\[
f\left(z\right)=f\left(a\right)+f'\left(a\right)\left(z-a\right)+\dots+
\sum_{\left|\beta\right|=k+1}R_{\beta}\left(z\right)\left(z-a\right)^{\beta},
\]
where
\[
R_{\beta}\left(z\right)=\frac{\left|\beta\right|}{\beta!}\int_0^1 \left(1-u\right)^{\left|\beta\right|-1}
D^{\beta}f\left(a+u\left(z-a\right)\right)\,du.
\]
\end{theorem}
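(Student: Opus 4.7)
The plan is to reduce the multivariate statement to the one-variable Taylor theorem with integral remainder by slicing along the segment joining $a$ and $z$.

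First I would introduce the auxiliary function $g(u) = f\left(a + u(z-a)\right)$ for $u \in [0,1]$, which is $k+1$ times continuously differentiable on an open interval containing $[0,1]$ provided $z$ is close enough to $a$. The one-variable Taylor theorem with integral remainder, which can be proved by induction on $k$ using repeated integration by parts starting from the fundamental theorem of calculus $g(1) - g(0) = \int_0^1 g'(u)\,du$, gives
\[
g(1) = \sum_{j=0}^{k} \frac{g^{(j)}(0)}{j!} + \int_0^1 \frac{(1-u)^k}{k!}\, g^{(k+1)}(u)\,du.
\]

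Next I would compute the derivatives $g^{(j)}(u)$ via the chain rule. An induction on $j$ combined with the multinomial expansion yields
\[
g^{(j)}(u) = \sum_{\left|\beta\right|=j} \frac{j!}{\beta!}\, D^{\beta} f\!\left(a + u(z-a)\right)(z-a)^{\beta},
\]
so that evaluating at $u=0$ recovers the standard polynomial part $\sum_{\left|\beta\right|=j}\frac{1}{\beta!}D^{\beta}f(a)(z-a)^{\beta}$, which matches the expansion $f(a)+f'(a)(z-a)+\cdots$ up to order $k$.

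Finally I would substitute the formula for $g^{(k+1)}(u)$ into the integral remainder, pull the sum outside the integral, and relabel $|\beta|=k+1$. The factor $\frac{(k+1)!}{\beta!}\cdot\frac{1}{k!} = \frac{k+1}{\beta!} = \frac{|\beta|}{\beta!}$ appears, producing exactly the stated $R_{\beta}(z)$. The only mildly delicate step is verifying the multinomial identity for $g^{(j)}(u)$; the rest is bookkeeping and pulling constants through the integral.
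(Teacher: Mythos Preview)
Your argument is correct and is in fact the standard derivation: slice along the segment to reduce to the one-variable Taylor formula with integral remainder, then expand $g^{(j)}(u)$ via the multinomial chain rule and identify the coefficient $\frac{(k+1)!}{\beta!\,k!}=\frac{|\beta|}{\beta!}$ in the remainder. The paper itself does not give a proof of this statement; it merely \emph{recalls} Taylor's theorem as a known tool and then records the estimate on $\partial_j R_\beta$ that it needs later. So there is nothing to compare against, and your proof supplies exactly what the paper omits.
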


We have a useful observation to make. First we have,
\[
\partial_{j} R_{\beta}\left(z\right)=\frac{\left|\beta\right|}{\beta!}\int_0^1\left(1-u\right)^{\left|\beta\right|-1}u
\left[\partial_j D^{\beta}f\left(a+u\left(z-a\right)\right)\right]\,dt,
\]
and hence
we have an estimate 
\begin{equation}
\label{Residuestimate}
\left|\partial_j R_{\beta}\left(z\right)\right|\leq 
\frac{1}{\beta! \left(\left|\beta\right|+1\right)}\max_{\left|\alpha\right|=k+2}
\left(\sup_{y\in B_{\left\|z\right\|}\left(a\right)}\left|F^{\left(\alpha\right)}\left(y\right)\right|\right).
\end{equation}
This observation will be useful below.

\section{Derivative estimates}

\begin{lemma}
\label{estimatesfromfundamentalsolution}
Let $u:\Omega\subset \mathbb{R}^{m+1} \longrightarrow \mathbb{R}$
be a solution to the heat equation. Assume that $\overline{B}_1\left(0,0\right)\subset \Omega$.
For each multiindex $k$, there exists a constant $C_k$, which only depends 
on $k$ and the dimension ($m+1$), such that the following holds for a solution to heat
equation
\[
\sup_{z\in B_\frac{1}{4}\left(0,0\right)}\left|\partial^k u\left(z\right)\right|
\leq C_k\sup_{z\in B_1\left(0,0\right)}\left|u\left(z\right)\right|.
\]
\end{lemma}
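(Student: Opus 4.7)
The plan is to express $u$ inside $B_{1/4}(0,0)$ as an integral operator acting on $u$ itself, with a kernel built from the $(m+1)$-dimensional heat kernel $\Gamma$ and a fixed cut-off; the kernel will be smooth in the evaluation variable, so arbitrary derivatives can be moved onto it and then estimated by a dimensional constant.

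First I would fix $\eta\in C_{c}^{\infty}(B_{1}(0,0))$ with $\eta\equiv 1$ on a neighbourhood of $\overline{B}_{1/2}(0,0)$, and set $v:=\eta u$ extended by zero to all of $\mathbb{R}^{m+1}$. A short computation using $Lu=0$ gives $Lv=g$, where
\[
g\;:=\;u\,L\eta+2\nabla\eta\cdot\nabla u
\]
is smooth and supported in the annular region $A:=\overline{B}_{1}(0,0)\setminus B_{1/2}(0,0)$. Since $v\equiv 0$ for $t<-1$, Duhamel's principle yields $v(x_{0},t_{0})=\iint_{s\le t_{0}}\Gamma(x_{0}-y,t_{0}-s)g(y,s)\,dy\,ds$; integrating by parts in $y$ to eliminate $\nabla u$ (the boundary terms vanish since $\nabla\eta$ has compact support) produces a representation
\[
v(x_{0},t_{0})\;=\;\iint_{A} K(x_{0},t_{0};y,s)\,u(y,s)\,dy\,ds,
\]
where $K$ depends on $\Gamma(x_{0}-y,t_{0}-s)$, $\nabla_{y}\Gamma(x_{0}-y,t_{0}-s)$ and bounded derivatives of $\eta$.

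Next I would observe that for $(x_{0},t_{0})\in\overline{B}_{1/4}(0,0)$ and $(y,s)\in A$ one has $|(x_{0},t_{0})-(y,s)|\ge 1/4$, which together with $s\le t_{0}$ forces either $|x_{0}-y|$ or $t_{0}-s$ to be bounded away from zero. Combined with the elementary fact that $t^{-N}e^{-c/t}$ is bounded uniformly in $t>0$ for every fixed $c,N>0$, this shows that $\Gamma(x_{0}-y,t_{0}-s)$ and all its derivatives in $(x_{0},t_{0})$ are uniformly bounded on $\overline{B}_{1/4}(0,0)\times A$ by constants depending only on the order of differentiation and on $m$. Consequently every $\partial^{k}_{(x_{0},t_{0})} K$ is uniformly bounded on this set by some $C'_{k}$.

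Finally, since $\eta\equiv 1$ on $B_{1/2}(0,0)$ we have $v\equiv u$ on $\overline{B}_{1/4}(0,0)$, so differentiating under the integral gives
\[
|\partial^{k}u(x_{0},t_{0})|\;=\;\Bigl|\iint_{A}\bigl[\partial^{k}_{(x_{0},t_{0})}K\bigr]u\,dy\,ds\Bigr|\;\le\;C_{k}\sup_{B_{1}(0,0)}|u|,
\]
with $C_{k}:=C'_{k}\,\mathrm{Vol}(A)$ depending only on $k$ and $m$. The main delicate point is the uniform control of $\Gamma$ and its derivatives in the previous paragraph: this is where the quantitative separation of $1/4$ between the evaluation ball and the support of $g$ is essential, and it is what allows the singularity of $\Gamma$ to be avoided.
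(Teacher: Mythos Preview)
Your argument is correct and close in spirit to the paper's: both cut off $u$, represent the result via the heat kernel, and then exploit that on the relevant set the kernel is evaluated away from its singularity, so all its derivatives are uniformly bounded and one may differentiate under the integral. The difference is purely organizational. The paper localizes its cutoff $\varphi$ tightly around the evaluation point $(x_0,t_0)$, which makes the source $L(\varphi u)$ overlap the singularity of $H$; it then introduces a second cutoff $\psi$ to split off the singular part, and passes to the adjoint $L^{*}$ to convert the derivatives of $u$ hidden in $L(\varphi u)$ into derivatives of $H$. You instead take $\eta\equiv 1$ on $B_{1/2}(0,0)$, so the source $g$ is automatically supported in the annulus $A$, a fixed distance $\ge 1/4$ from $\overline{B}_{1/4}(0,0)$; a single integration by parts in $y$ then removes $\nabla u$. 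Your route is a bit more economical (one cutoff, one integration by parts), while the paper's double-cutoff and adjoint argument is set up so that the final bound is displayed explicitly in terms of $\sup_{|\sigma|\le |\alpha|+2}|\partial^{\sigma}H|$ on a fixed annulus; the mathematical content is the same.
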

\begin{proof}
Let $H\left(x,t\right)$
be the fundamental solution for the heat equation.
Let $\left(x_0,t_0\right)\in B_{\frac{1}{4}}\left(0,0\right)$.
We first construct a cutoff function as follows. We fix $0\leq \varphi_1\leq 1$ which is 1
on $B_\frac{1}{8}\left(x_0\right)$ and whose support is contained
in $B_{\frac{1}{4}}\left(x_0\right)$. We fix $0\leq \varphi_2\leq 1$ which is 1 on
$\left[t_0-\frac{1}{64},t_0+\frac{1}{64}\right]$ and whose support
is contained in $\left(t_0-\frac{1}{16},t_0+\frac{1}{16}\right)$. Then define
\[
\varphi\left(x,t\right)=\varphi_1\left(x\right)\varphi_2\left(t\right).
\]
Let 
\[
L\left(\varphi u\right) = v.
\]
Then, we can write
\[
\varphi u\left(x,t\right)=\left<H,v\right>,
\]
where
\[
\left<H,v\right> = \int_{-\infty}^t\int_{-\infty}^{\infty}H\left(x-y,t-s\right)v\left(y,s\right)\,dy\,ds.
\]

We will concentrate on estimating derivatives with respect to the spatial variables,
since any time variable can be exchanged by a certain amount of them.

Let $\psi=\varphi\left(4x,16t\right)$, and
write
\[
H*v=\left(1-\psi\right)H*v+\psi H*v.
\]
First notice that $\psi H* v$ vanishes in $B_{\frac{1}{64}}\left(x_0,t_0\right)$; hence 
we only must obtain bounds 
on $\partial^{\alpha}\left[\left(1-\psi\right)H*v\right]$ for a multiindex $\alpha$.
Our task is then to estimate 
\[
\partial^{\alpha}\left(\left[\left(1-\psi\right)H\right]*v\right)
=
\left(\partial^{\alpha}\left[\left(1-\psi\right)H\right]\right)*v.
\]
Since by Leibniz formula
\[
\partial^{\alpha}\left[\left(1-\psi\right)H\right]=
\sum \binom{\alpha}{\beta}\partial^{\alpha-\beta}\left(1-\psi\right)\partial^{\beta}H,
\]
and for $\alpha\neq \beta$, $\partial^{\alpha-\beta}\left(1-\psi\right)$ vanishes in the 
ball $B_{\frac{1}{32^2}}\left(x_0,t_0\right)$, so we can write
\[
\partial^{\alpha}\left(\left[\left(1-\psi\right)H\right]*v\right)
=
\left(\left[\left(1-\psi\right)\partial^{\alpha}H\right]\right)*v+R,
\]
where $R$ vanishes in $B_{\frac{1}{32^2}}\left(x_0,t_0\right)$.

Thus we have for $z'\in B_{\frac{1}{32^2}}\left(z_0\right)$, $z_0=\left(x_0,t_0\right)$, 
\begin{eqnarray*}
\left|\partial^{\alpha}\left(\left[\left(1-\psi\right)H\right]*v\right)\left(z'\right)\right|&=&
\left|\left<v,\left[\left(1-\psi\right)\partial_x^{\alpha}H\right]\left(x-x_0,t-t_0\right)\right>\right|\\
&=&
\left|\left<\phi u,L^*_{x,t}\left[\left(1-\psi\right)\partial_x^{\alpha}H\right]\left(x-x_0,t-t_0\right)\right>\right|\\
&\leq&
M\sup_{z\in B_{1}\left(0,0\right)} \left|u\left(z\right)\right|
\sup_{\left|\sigma\right|\leq 2+\left|\alpha\right|,\frac{1}{32^2}\leq\left\|z-z_0\right\|\leq \frac{9}{8}}\left|\partial^{\sigma}H\left(z-z_0\right)\right|
\\
&=&
M\sup_{z\in B_{1}\left(0,0\right)} \left|u\left(z\right)\right|
\sup_{\left|\sigma\right|\leq 2+\left|\alpha\right|,\frac{1}{32^2}\leq\left\|z\right\|\leq \frac{9}{8}}\left|\partial^{\sigma}H\left(z\right)\right|,
\end{eqnarray*}
where $M$ is a constant independent of $H$ and $\alpha$ (To be more precise, $M$ only depends on the dimension $m$). 

\end{proof}

By rescaling we obtain the following general estimate for solutions to the heat equation.
\begin{lemma}
\label{cauchyestimates}
Let $u$ be a solution to the heat equation on an open set $\Omega\subset \mathbb{R}^{m+1}$, and
let $0<r<1$. Let $a$ be a point in $\Omega$ such that $\overline{B_{r}\left(a\right)}\subset \Omega$
There exists a constant $a_m$ such that
\[
\sup_{B_{\frac{r^2}{4}}\left(a\right)}\left|\partial^k u\left(x\right)\right|
\leq \frac{a_m}{r^{2\left|k\right|}}\sup_{B_{r}\left(a\right)}\left|u\left(x\right)\right|,
\]
for all multiindices $k$ such that $\left|k\right|=1,2$. Without loss of generality,
we may assume that $a_m\geq 1$.
\end{lemma}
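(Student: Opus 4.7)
The plan is to reduce to Lemma \ref{estimatesfromfundamentalsolution} by a parabolic rescaling. After translating so that $a=(0,0)$, I would set
\[
v(y,s):=u(ry,\,r^{2}s).
\]
The heat operator $L$ is invariant under parabolic scaling, so $Lv=0$; and for $(y,s)\in B_{1}(0,0)$ one has $r^{2}|y|^{2}+r^{4}s^{2}\le r^{2}(|y|^{2}+s^{2})\le r^{2}$ (using $r\le 1$), so the image of $B_{1}(0,0)$ under the rescaling lies in $\overline{B_{r}(0,0)}\subset\Omega$. Hence $v$ is smooth on a neighbourhood of $\overline{B_{1}(0,0)}$ and $\sup_{B_{1}(0,0)}|v|\le\sup_{B_{r}(a)}|u|$.

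Applying Lemma \ref{estimatesfromfundamentalsolution} to $v$ produces constants $C_{k}$ with $\sup_{B_{1/4}(0,0)}|\partial^{k}v|\le C_{k}\sup_{B_{1}(0,0)}|v|$ for $|k|\le 2$. The chain rule gives $\partial^{k}v=r^{\,|k|+k_{m+1}}\partial^{k}u$, since each spatial differentiation contributes a factor $r$ and each time differentiation a factor $r^{2}$. Because $0\le k_{m+1}\le|k|$ and $r\le 1$, we have $|\partial^{k}u|\le r^{-2|k|}|\partial^{k}v|$. Finally, any $(x,t)\in B_{r^{2}/4}(a)$ corresponds via $y=(x-a)/r$, $s=(t-a_{m+1})/r^{2}$ to a point satisfying
\[
|y|^{2}+s^{2}=\frac{r^{2}|x-a|^{2}+(t-a_{m+1})^{2}}{r^{4}}\le\frac{|x-a|^{2}+(t-a_{m+1})^{2}}{r^{4}}\le\frac{1}{16},
\]
so $(y,s)\in B_{1/4}(0,0)$. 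Combining these estimates yields the claimed bound with $a_{m}:=\max(1,C_{1},C_{2})$.

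No deep obstacle appears; the argument is essentially bookkeeping on top of Lemma \ref{estimatesfromfundamentalsolution}. The one point requiring some care is that parabolic rescaling distorts Euclidean balls into ellipsoids, so the containment $B_{r^{2}/4}(a)\subset(\text{image of }B_{1/4}(0,0))$ is not automatic. The mild inequality $r^{2}\le 1$ used above takes care of it, and also reveals why the inner ball in the conclusion must have radius proportional to $r^{2}$ rather than $r$: under parabolic rescaling the time direction is squeezed more than the spatial one.
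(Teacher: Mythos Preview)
Your argument is correct and follows the same route as the paper: parabolic rescaling $v(y,s)=u(ry,r^{2}s)$ to reduce to Lemma~\ref{estimatesfromfundamentalsolution}, followed by the chain rule and the observation $r^{|k_x|+2k_t}\ge r^{2|k|}$. You are in fact more careful than the paper's own write-up about verifying the two ellipsoid/ball containments (that the image of $B_1$ lands in $B_r(a)$, and that $B_{r^2/4}(a)$ pulls back into $B_{1/4}$), which is exactly the point that forces the inner radius to be $r^2/4$ rather than $r/4$.
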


\begin{proof}
By Lemma \ref{estimatesfromfundamentalsolution}, there is a $a_m$ such that
\[
\sup_{z\in B_{\frac{1}{4}}\left(a\right)}\left|\partial^k u\left(z\right)\right|
\leq a_m\sup_{z\in B_{1}\left(a\right)}\left|u\left(z\right)\right|,
\]
with $\left|k\right|=1,2$ (as we have fixed the sizes of the multiindex we can drop the dependence 
on $k$ for the constant given by Lemma \ref{cauchyestimates} and focus only on its dependence
on the dimension). 
On the other hand, if $u$ is a solution to the heat
equation $u\left(rx,r^2t\right)$ is also a solution to the heat equation. 
Using this rescaling and assuming $0<r<1$, we have the following. 
If $u$ is a solution of the heat equation in $B_{r^2}$, rescaling
\[
u_{r}\left(x,t\right)=u\left(r^2x,rt\right)
\]
is a solution to the heat equation in an ellipsoid of major semiaxis $1$ and minor semiaxis $r$.
This is contained in the ball of radius 1. So by previous estimates we have
\[
\sup_{B_\frac{1}{4}}\left|\partial^{\left|k\right|}u_r\right|\leq a_m\sup_{B_1}\left|u_r\right|=a_m\sup_{B_r}\left|u\right|,
\]
but, using the notation $k_x=\left(k_1,\dots,k_m\right)$ and $k_t=k_{m+1}$,
\[
\partial^{\left|k\right|}u_r=r^{2k_t+\left|k_x\right|}\partial^{\left|k\right|}u\left(rx,r^2t\right),
\]
and thus
\[
\sup_{B_\frac{r^2}{4}}\left|\partial^{\left|k\right|}u\right|\leq 
\frac{a_m}{r^{2k_t+\left|k_x\right|}}\sup_{B_r}\left|u\right|\leq \frac{a_m}{r^{2\left|k\right|}}\sup_{B_r}\left|u\right|.
\]
\end{proof}

\subsection{Heat Bochner-Takahashi $K$-mappings.}
As announced in the introduction,
we will consider maps 
\[F:B_1\left(0\right)\subset \mathbb{R}^{m+1}\longrightarrow\mathbb{R}^{m+1},\]
each of whose components satisfies the heat equation, and which satisfy the following estimate
\begin{equation}
\label{Takahashicondition}
\max_{\left\|z\right\|\leq r}\left\|F'\left(z\right)\right\|\leq K
\max_{\left\|z\right\|\leq r}\left|\mbox{det}\left(F'\left(z\right)\right)\right|^{\frac{1}{m+1}},
\quad \mbox{for all}\quad 0\leq r <1.
\end{equation}
We call $F$ a heat Bochner-Takahashi $K$-mapping.

As a normalisation
we impose that $\left|\mbox{det}\left(F'\left(0\right)\right)\right|=1$, and we will
assume without loss of generality that $F\left(0\right)=0$ and
that $F'$ is bounded on $\overline{B}_1\left(0\right)$. We will show that 
for this family of maps Bloch's theorem holds (below we explain why there is no 
need to worry about the case when $F'$ is not bounded).

We begin our proof just as before, by
picking a of positive numbers $r_j\in \left(0,1\right)$, $j=0,1,2,\dots$ as
follows. First, pick any $r_0=r_{\gamma}> 0$
and then choose $\gamma>1$ (so the choice of $r_0$ determines the choice of $\gamma$) such that
\begin{equation}
\label{definitionofbeta}
r_{\gamma}\prod_{j=1}^{\infty}\left(1+\gamma^{-j}\right)=1
\end{equation}
and construct maximal finite sequences $r_j$ and $\epsilon_j$, $j=0,\dots,l$ 
in the following way:

\medskip
 Denote
by $M\left(r_{j}\right)$ the maximum of $\left|\mbox{det}\left(F'\left(x\right)\right)\right|$
in the closed ball of radius $r_j$, that is
\[
M\left(r_{j}\right)=\max_{\left\|x\right\|\leq r_j}
\left|\mbox{det}\left(F'\left(x\right)\right)\right|.
\]
Once $r_0,r_1,\dots,r_{n-1}$ have been chosen, with each $r_j>r_{j-1}$, and
\[
M\left(r_j\right)^{\frac{1}{m+1}}=\gamma^4 M\left(r_{j-1}\right)^{\frac{1}{m+1}}, \quad j=1,2\dots,n-1,
\]
and $\epsilon_0,\epsilon_1,\dots,\epsilon_{n-2}$,
$n\geq 1$
such that
\[
r_j=\left(1+\epsilon_{j-1}\right)r_{j-1}, \quad j=1,2,\dots,n-1,
\]
we construct $r_{n}$ and $\epsilon_{n-1}$ as follows.
If there is {\bf no} $r_{n}>r_{n-1}$ with $r_n<1$ such that 
\begin{equation*}
\left(\frac{M\left(r_n\right)}{M\left(r_{n-1}\right)}\right)^{\frac{1}{m+1}}=\gamma^4,
\end{equation*}
we stop defining the $r$'s, i.e., we leave the sequence of $r$'s
as it is (its terms finally being $r_0,r_1,\dots,r_{n-1}$),
and define 
\[\epsilon_{n-1}=\dfrac{1}{r_{n-1}}-1,\]
and we have finished defining our sequences.
Otherwise, choose $r_n$ so that
\[
\left(\frac{M\left(r_{n}\right)}{M\left(r_{n-1}\right)}\right)^{\frac{1}{m+1}}=\gamma^4 \quad
\mbox{and then set}
\quad
\epsilon_{n-1}=\frac{r_n}{r_{n-1}}-1.
\]
The assumption on the boundedness of $F'$ on the closure of $B_1\left(0\right)$
implies that the construction eventually stops, leaving as a result two finite sequences. We shall denote the sequences thus constructed by
$r_0,r_1,\dots,r_l$ and $\epsilon_0,\epsilon_1,\dots,\epsilon_{l}$.

Notice then that the sequence $\epsilon_0,\epsilon_1,\dots,\epsilon_{l}$ satisfies
\begin{equation}
\label{definitionepsilon}
r_0\prod_{j=0}^{l}\left(1+\epsilon_{j}\right)= 1.
\end{equation}
Observe also the following important two facts:
we have that $\left(\dfrac{M\left(1\right)}{M\left(r_{l}\right)}\right)^{\frac{1}{m+1}}\leq \gamma^4$,  
\[
\left(\dfrac{M\left(r_{n+1}\right)}{M\left(r_{n}\right)}\right)^{\frac{1}{m+1}}=\gamma^4 \quad \mbox{for}\quad 0\leq n\leq l-1,
\]
and also that at least for one $k$, it must hold that $\epsilon_k\geq \gamma^{-\left(k+1\right)}$.

It might serve as a clarification to the reader 
to show a couple of situations on how the construction described above might turn out. First, it could happen that the two sequences
defined above contain only one element. Indeed, once $r_0$ is chosen, if its 
corresponding $\gamma$ is such that 
$M\left(1\right)^{\frac{1}{m+1}}\leq \gamma^4$, then the sequence of $r$'s would only consist
of $r_{0}$ (and in this case $l=0$), and the sequence of $\epsilon$'s only of $\epsilon_0$, and we would actually 
have that
\[
\epsilon_0=\frac{1}{r_0}-1,
\]
so the whole construction might stop at the first step (in other words, it might not be possible to find $\beta_1$).

Another situation that may arise is, for instance, that once $r_0$ has been chosen
it occurs that its corresponding $\gamma$
satisfies $\gamma^4 M\left(r_0\right)^{\frac{1}{m+1}}<M\left(1\right)^{\frac{1}{m+1}}\leq M\left(r_0\right)^{\frac{1}{m+1}}\gamma^8$. Then the sequences of $r$'s 
and $\epsilon$'s would consist each of only two terms (in this case $m=1$), $r_0$ and $r_1$ with
$\left(M\left(r_1\right)/M\left(r_0\right)\right)^{\frac{1}{m+1}}=\gamma^4$, and
\[
\epsilon_0=\frac{r_1}{r_{0}}-1,\quad
\epsilon_1=\frac{1}{r_{1}}-1.
\]
For convenience, we define $r_{l+1}=1$.

Before we continue, as the assumption on the boundedness of $F'$ might be of concern to the reader,
we want to point out the following. First, if we are only interested in proving the
existence 
of a bound from below for the radius of a univalent disk covered by a member of the family under consideration,
we just have to proceed with our arguments in a ball centered at $\left(0,0\right)$ of radius strictly
smaller than 1. On the other hand, 
if $F'$ is not bounded the construction described above would not stop,
but the arguments below work just the same; all
we actually need, if $F'$ happens to be unbounded, is that the closures of the balls 
involved in the arguments below are contained in $B_1\left(0,0\right)$, 
and it is easy to see that it happens if $F'$ is unbounded.

We want to solve the equation
\[
w=F\left(z\right).
\]
To proceed, we let $\beta_n\in \overline{B}_{r_n}\left(0,0\right)$ a
point where $M\left(r_n\right)$ is reached.
By Taylor's theorem, solving the previous equation is equivalent to solving
\[
w=F\left(\beta_n\right)+F'\left(\beta_n\right)\left(z-\beta_n\right)
+\sum_{\left|\alpha\right|=2}R_{\alpha}\left(z\right)\left(z-\beta_n\right)^{\alpha}.
\]
This is equivalent to the following 
fixed point problem:
\begin{equation}
\label{fixedpointproblem1}
z=\left[F'\left(\beta_n\right)\right]^{-1}\left(w-F\left(\beta_n\right)\right)
+\beta_n-
\sum_{\left|\alpha\right|=2}R_{\alpha}\left(z\right)F'\left(\beta_n\right)^{-1}\left(z-\beta_n\right)^{\alpha}.
\end{equation}
Let us define
\[
g_w\left(z\right):=F'\left(\beta_n\right)^{-1}\left(w-F\left(\beta_n\right)\right)+\beta_n
-\sum_{\left|\alpha\right|=2}R_{\alpha}\left(z\right)F'\left(\beta_n\right)^{-1}\left(z-\beta_n\right)^{\alpha}.
\]

The main idea now is to show that there is a disk, centered at $F\left(\beta_n\right)$, call it $D$,
such that if $w\in D$, we can restrict $g_w$ to an $m+1$-dimensional closed ball
centered at $\beta_n$, $n\leq l$, say $D'$, whose radius is independent of $w$, so that 
$g_w:D'\longrightarrow D'$, and so that it is a contraction. Then Banach's
Contraction Mapping Principle can be applied now to show that (\ref{fixedpointproblem1}) has a unique solution, and 
this shows that for every $w\in D$ there is a unique $z\in D'$ such that
$F\left(z\right)=w$.
From this we can conclude that when restricted to $D'\cap F^{-1}\left(D\right)$, $F$ is one to one and onto $D$,
and that the radius of $D$ is a bound from below for Bloch's constant.

Before we continue we must observe the following. 
A map $G:\Omega\subset \mathbb{R}^{m+1}\longrightarrow \mathbb{R}^{m+1}$, $\Omega$ convex, 
such that each row $G'_k$ of its Jacobian matrix satisfies
\[\left\|G'_k\left(x\right)\right\|\leq \dfrac{1-\sigma}{\sqrt{m+1}}, \quad\sigma>0, \]
for all $x\in \Omega$, is a contraction.
So we impose a condition on $F$ to make sure that $g_w$ with domain 
$D'=\overline{B}_{\eta}\left(\beta_n\right)\subset B_{1}\left(0,0\right)\subset \mathbb{R}^{m+1}$ is a contraction, namely
\begin{eqnarray*}
&\sum_{\left|k\right|= 2}\dfrac{\left\|F'\left(\beta_n\right)^{-1}\right\|
\max_{x\in \overline{B}_{\frac{\left(\epsilon_n r_n\right)^2}{4}}\left(\beta_n\right)}\left|F^{\left(k\right)}\left(z\right)\right|}{\left(k-1\right)!}
\eta^{\left|k\right|-1}&\\
&+&\\
&\sum_{\left|k\right|= 2}\dfrac{\left\|F'\left(\beta_n\right)^{-1}\right\|
\max_{x\in \overline{B}_{\frac{\left(\epsilon_n r_n\right)^2}{4}}\left(\beta_n\right)}\left|F^{\left(k+1\right)}\left(z\right)\right|}
{k!\left(\left|k\right|+1\right)}
\eta^{\left|k\right|}&\\
&\leq&\\
&\dfrac{1-\sigma}{\sqrt{m+1}}.&
\end{eqnarray*}
In the inequality above keep in mind that $k$ is a multiindex, and that $k-1$ is a shorthand for a multiindex $k'$ such that
$k'<k$ and $\left|k\right|-\left|k'\right|=1$.
Also, notice that in the previous estimate we are assuming that $\eta\leq \left(\epsilon_n r_n\right)^2/4$,
(and observe that we have also used estimate (\ref{Residuestimate})).

Using Lemma \ref{cauchyestimates}, and the fact that for any matrix $A=\left(a_{ij}\right)$
the inequality $\left|a_{ij}\right|\leq \left\|A\right\|$ holds, 
the previous inequality
can be replaced by the (stronger) condition (we need that $\epsilon_nr_n/a_m\leq 1$ below,
but this will be so as long as $a_m\geq 1$)
\begin{equation}
\label{contractionconditionscvm}
2^{2}\times 2^{m+1}
\frac{\left(a_m\right)^{\left|k\right|-1}\left\|F'\left(\beta_n\right)^{-1}\right\|\max_{x\in
\overline{B}_{\epsilon_nr_n}\left(\beta_n\right)}\left\|F'\left(z\right)\right\|}
{\left(\epsilon_{n}r_n/a_m\right)^{4}}
\eta^{\left|k\right|-1}\leq \frac{1-\sigma}{\sqrt{m+1}},
\end{equation}
where $a_m\geq 1$ is a dimensional constant that only depends on the $m+1$-dimensional heat kernel.
The factor $2^2$ has been included so that at the end of our estimates we guarantee that
$\eta\leq \left(\epsilon_n r_n\right)^2/4$.

In what follows, we shall use the notation $\lambda_F\left(z\right)$ to indicate $\lambda\left(F'\left(z\right)\right)$.
Now, 
we replace (\ref{contractionconditionscvm}) by the stronger inequality
\begin{equation}
\label{contractionconditionscvm2}
2^{m+3}
\frac{\left\|F'\left(\beta_n\right)^{-1}\right\|\max_{\left\|z\right\|\leq r_{n+1}}\left\|F'\left(z\right)\right\|}
{\left(\epsilon_{n}r_n/a_m\right)^{4}}
\eta\leq \frac{1-\sigma}{\sqrt{m+1}}.
\end{equation}
On the other hand, to make sure that
$g_w:\overline{B}_{\eta}\left(\beta_n\right)\longrightarrow \overline{B}_{\eta}\left(\beta_n\right)$, we can
estimate from (\ref{fixedpointproblem1})
\begin{eqnarray*}
\left\|F'\left(\beta_n\right)^{-1}\right\|\left\|w-F\left(\beta_n\right)\right\|
&\leq& \left\|z-\beta_n\right\|-
\left\|\sum_{\left|k\right|=2}R_{k}\left(z\right)F'\left(\beta_n\right)^{-1}\left(z-\beta_n\right)^{k}\right\|\\
&\leq& \eta - \sum_{k=2}\left\|F'\left(\beta_n\right)^{-1}\right\|\left|R_k\left(z\right)\right|
\eta^{\left|k\right|}\\
&\leq&\eta-\eta\sum_{\left|k\right|= 2}\left\|F'\left(\beta_n\right)^{-1}\right\|
\frac{\max_{x\in \overline{B}_{\frac{\left(\epsilon_n r_n\right)^2}{4}}\left(\beta_n\right)}\left|F^{\left(k\right)}\left(z\right)\right|}{k!}
\eta^{\left|k\right|-1}\\
&\leq& \eta-\left(1-\sigma\right)\eta=\sigma\eta.
\end{eqnarray*}
Therefore if
\[
\left\|w-F\left(\beta_n\right)\right\|\leq \sigma \eta \lambda_F\left(\beta_n\right),
\]
then $g_w$ sends the ball $\overline{B}_{\eta}\left(\beta_n\right)$ to itself. Notice that this give an estimate 
for the radius of the disk $D$ we mentioned above, and thus $\sigma \eta \lambda_F\left(\beta_n\right)$ would
give an estimate for the radius of a schlicht disk covered by $F$.

Next, we estimate $\eta$ in terms of $\epsilon_{n}$ and $r_n$. The estimate we need for 
$\eta$
is a consequence of (\ref{contractionconditionscvm2}), so let us work on this inequality.
First, we estimate
\begin{eqnarray*}
\left\|F'\left(\beta_{n}\right)^{-1}\right\|\max_{\left\|z\right\|\leq r_{n+1}}\left\|F'\left(z\right)\right\|
&=& \frac{\sqrt{m+1}\max_{\left\|z\right\|\leq r_{n+1}}\left\|F'\left(z\right)\right\|}{\lambda_F\left(\beta_n\right)}\\
&\leq&
\frac{\sqrt{m+1}K\left|\mbox{det}\left(F'\left(\beta_{n+1}\right)\right)\right|^{\frac{1}{m+1}}}{\lambda_F\left(\beta_n\right)}\\
&\leq&\frac{\sqrt{m+1}K^{m+2}\left|\mbox{det}\left(F'\left(\beta_{n+1}\right)\right)\right|^{\frac{1}{m+1}}}
{\left|\mbox{det}\left(F'\left(\beta_{n}\right)\right)\right|^{\frac{1}{m+1}}}\\
&=&\sqrt{m+1}K^{m+2}\gamma^4,
\end{eqnarray*}
where we have used the following fact, which holds
for Bochner-Takahashi $K$-mappings because of the choice of $\beta_n$ (and whose proof we postpone
to the end of the section):
\begin{equation}
\label{Wusmalleigen}
\lambda_{F}\left(\beta_{n}\right)\geq \frac{1}{K^{m+1}}\left|\mbox{det}\left(F'\left(\beta_n\right)\right)\right|^{\frac{1}{m+1}}.
\end{equation}
So, from (\ref{contractionconditionscvm2}), we obtain:
\[
2^{m+3}
\sqrt{m+1}K^{m+2} \gamma^4
\dfrac{\eta}{\left(\epsilon_{n}r_{n}/a_m\right)^4}\leq 
\frac{1-\sigma}{\sqrt{m+1}},
\]
and hence
\[
\eta \leq \frac{1-\sigma}{2^{m+3}\left(m+1\right) K^{m+2} \gamma^4}\left(\frac{\epsilon_nr_n}{a_m}\right)^4.
\]
Observe that $\eta\leq \left(\epsilon_n r_n\right)^2/4$ (of course 
notice that $\epsilon_nr_n<1$ and $a_m\geq 1$), as we need it to be.
This estimate on $\eta$ in turn implies the following estimate from below for 
the radius of a univalent disk covered by $F$:
\[
\frac{\left(1-\sigma\right)\sigma}{2^{m+3}\left(m+1\right) K^{m+2} \gamma^4}\left(\frac{\epsilon_nr_n}{a_m}\right)^4 \lambda_F\left(\beta_n\right).
\]
Using (\ref{Wusmalleigen}), and that by construction there is an $n$ for which the inequality
\[M\left(r_n\right)^{\frac{1}{m+1}}\epsilon_{n}^4\geq 
\frac{M\left(r_{\gamma}\right)^{\frac{1}{m+1}}}{\gamma^4}\]
holds, this estimate becomes (using (\ref{Wusmalleigen}) again)
\begin{equation}
\label{blochwu1}
\frac{\left(1-\sigma\right)\sigma}{2^{m+3}\left(m+1\right) K^{2m+3} \gamma^4}\left(\frac{r_{\gamma}}{a_m}\right)^4 
\frac{M\left(r_{\gamma}\right)^{\frac{1}{m+1}}}{\gamma^4}.
\end{equation}

Now we go for {\bf the second part of the argument}. Consider the fixed point problem
\begin{equation}
\label{FPPOSCV}
z=\left(F'\left(0\right)^{-1}\right)w-\sum_{\left|k\right|= 2}F'\left(0\right)^{-1}R_{k}\left(z\right)z^{k}.
\end{equation}
In this case, imposing to the left hand side of (\ref{FPPOSCV}) to be a contraction, using Lemma \ref{cauchyestimates}, 
and Bloch-Takahashi's condition (\ref{Takahashicondition}), we obtain that
\begin{equation}
\label{Takahashicondition1}
2^{m+3}
\sqrt{m+1}K M\left(r_{\gamma}\right)^{\frac{1}{m+1}}
\dfrac{\eta}{\left(r_{\gamma}/a_m\right)^4}\leq \frac{\left(1-\sigma\right)}{\sqrt{m+1}}\lambda_F\left(0\right).
\end{equation}
Proceeding as before, we arrive at the following estimate from below for the radius of a schlicht disk covered by $F$
\[
\frac{\sigma\left(1-\sigma\right)}{2^{m+3} \left(m+1\right)}\frac{1}{K M\left(r_{\gamma}\right)^{\frac{1}{m+1}}}
\left(\frac{r_{\gamma}}{a_m}\right)^4\lambda_{F}\left(0\right)^2
\]
which, using (\ref{Wusmalleigen}) with $\beta_n=0$ and the 
normalisation, gives an estimate from below for the radius of a schlicht disk covered by $F$, namely,
\begin{equation}
\label{blochwu2}
\frac{\sigma\left(1-\sigma\right)}{\left(m+1\right) 2^{m+3}}\frac{1}{K^{2m+3} M\left(r_{\gamma}\right)^{\frac{1}{m+1}}}
\left(\frac{r_{\gamma}}{a_m}\right)^4.
\end{equation}
Before we continue, observe that if the maximum of $\left|\mbox{det}\left(F'\left(x\right)\right)\right|$
on the unit ball
is 1, then
the first part of the proof becomes unnecessary. Indeed, $r_{\gamma}$ can 
be chosen as close to the unit circle as wanted, 
and hence we would get that $F$ covers a schlicht 
disk of radius
\[
\frac{\sigma\left(1-\sigma\right)}{\left(m+1\right)2^{m+3}}\frac{1}{K^{2m+3}}
\frac{1}{a_m^4}.
\]
In any case, notice that (\ref{blochwu1}) is better than (\ref{blochwu2}) when
\[
M\left(r_{\gamma}\right)^{\frac{1}{m+1}}\geq \gamma^4,
\]
whereas (\ref{blochwu2}) is better than (\ref{blochwu1}) when the opposite inequality holds; but before we use this fact 
to give an estimate from below for the radius of a schlicht disk covered by a heat 
Bochner-Takahashi $K$-mapping,
we must estimate $r_{\gamma}$:
starting from (\ref{definitionofbeta}), we obtain
\[
\log r_{\gamma}=-\sum_{j\geq 1}\log\left(1+\gamma^{-j}\right),
\]
and then by Taylor's theorem
\[
\log\left(1+\gamma^{-j}\right)\leq \gamma^{-j}-\frac{1}{2}\gamma^{-2j}+\frac{1}{3}\gamma^{-3j},
\]
so we have
\[
\log r_{\gamma}\geq -\frac{1}{\gamma-1}+\frac{1}{2}\frac{1}{\gamma^2-1}-\frac{1}{3}\frac{1}{\gamma^3-1},
\]
that is,
\[
r_{\gamma}\geq e^{-\frac{1}{\gamma-1}+\frac{1}{2}\frac{1}{\gamma^2-1}-\frac{1}{3}\frac{1}{\gamma^3-1}}.
\]

From (\ref{blochwu1}) and (\ref{blochwu2}),
the observation on when between these
two estimates is better than the other, and the estimate from below for $r_{\gamma}$,
we conclude that Bloch's constant for harmonic Bochner-Takahashi $K$-mappings is bounded from below
by
\[
\frac{\sigma\left(1-\sigma\right)}{\left(m+1\right)2^{m+3}}\frac{1}{K^{2m+3}}
\left(\frac{r_{\gamma}}{a_m\gamma}\right)^4
\geq \frac{\sigma\left(1-\sigma\right)}{\left(m+1\right)2^{m+3}}\frac{1}{K^{2m+3}}
\left(\frac{e^{-\frac{1}{\gamma-1}+\frac{1}{2}\frac{1}{\gamma^2-1}-\frac{1}{3}\frac{1}{\gamma^3-1}}}{a_m\gamma}\right)^4.
\]
This proves Theorem \ref{BlochTakahashi} giving, after playing around a bit with $\sigma$ and $\gamma$, 
the bound from below
\[
\frac{0.22^4}{2^{m+5}\left(m+1\right)}\frac{1}{a_m^4 K^{2m+3}}
\]
for Bloch's constant of heat Bochner-Takahashi $K$-mappings.

\subsection{Last Remarks}
Here we prove inequality (\ref{Wusmalleigen}). Since $\beta_n$ is a point where
$M\left(r_n\right)$ is reached, we have by the Takahashi-Bochner condition
\begin{eqnarray*}
\left\|F'\left(\beta_n\right)\right\|&\leq& K\left|\mbox{det}\left(F'\left(\beta_n\right)\right)\right|^{\frac{1}{m+1}}\\
&\leq&K \lambda_F\left(\beta_n\right)^{\frac{1}{m+1}}\Lambda_F\left(\beta_n\right)^{\frac{m}{m+1}},
\end{eqnarray*}
from which we obtain $\Lambda_F\left(\beta_n\right)\leq K^m \lambda_F\left(\beta_n\right)$.
This inequality can be used to obtain
\begin{eqnarray*}
\Lambda_F\left(\beta_n\right)&\leq&K\left|\mbox{det}\left(F'\left(\beta_n\right)\right)\right|^{\frac{1}{m+1}}\\
&\leq&
K\left[\lambda_F\left(\beta_n\right)\right]^{\frac{1}{m+1}}\left[K^m\lambda_F\left(\beta_n\right)\right]^{\frac{m}{m+1}}
=K^{m+1}\lambda_F\left(\beta_n\right),
\end{eqnarray*}
from which we readily get (\ref{Wusmalleigen}).

\bibliographystyle{amsplain}

\end{document}